\newtheorem{theorem}{Theorem}[section]
\newtheorem*{theorem*}{Theorem A}
\newtheorem{lemma}[theorem]{Lemma}
\newtheorem*{definition*}{Definition}
\newtheorem*{remark*}{Remark}
\newtheorem*{observation*}{Observation}
\newtheorem*{assumption*}{Assumption}
\theoremstyle{definition}
\theoremstyle{remark}
\newtheorem{remark}{Remark}[section]
\newtheorem{claim}{Claim}[section]
\newcommand{\R}{\mathbb{R}}
\newcommand{\N}{\mathbb{N}}
\newcommand{\D}{\mathbb{D}}
\newcommand{\C}{\mathbb{C}}
\newcommand{\B}{\mathbb{B}}
\begin{document}

\title{The weak-type (1,1) estimate of the $\mathcal{H}$-Harmonic Bergman projection}

\author
{Kenan Zhang}
\address
{Kenan Zhang\\
	School of Mathematical Sciences, Fudan University\\
	Shanghai, 200433, China}
\email{knzhang21@m.fudan.edu.cn}

\thanks{}

\begin{abstract}
	In this note, the author recalls the Calderon-Zygmund theory on the unit ball and derives the weak (1,1) boundedness of the projection for $\mathcal{H}$-harmonic Bergman space.
\end{abstract}

\subjclass[2020]{42B20,31C05,46E22.}
\keywords{projection, $\mathcal{H}$-harmonic functions.}

\maketitle

\setcounter{equation}{0}

\section{Introduction}
For $n\geqslant 2$, let $\B_n=\{x\in\mathbb{R}^n:|x|<1\}$ be the unit ball and $\partial\B_n=\{x\in\R^n:|x|=1\}$ be the unit sphere, where $|x|=(\sum_{i=1}^n|x_i|^2)^{1/2}$ is the standard Euclidean norm on $\mathbb{R}^n$. 
The hyperbolic ball is the unit ball $\B_n$ equipped with the hyperbolic metric
\[ds^2=\frac{4}{(1-|x|^2)^2}\sum\limits_{i=1}^n dx_i^2.\]

For $a\in\B_n$, the M\"{o}bius transformation $\varphi_a$ exchanging $a$ and $0$ in $\B_n$ is the following self-map on $\B_n$:  
\[
\varphi_a(x)=\frac{a|x-a|^2+(1-|a|^2)(a-x)}{|x-a|^2+(1-|a|^2)(1-|x|^2)}.
\] 
The Laplacian operator $\Delta_h$ with respect to the hyperbolic metric is the linear operator 
such that
\begin{equation}\label{1.1}
	(\Delta_hf)(a)=\Delta(f\circ\varphi_a)(0)
\end{equation}
for all $f\in C^2(\B_n)$ and for each $a\in\B_n$.
This operator is also known as the Laplace-Beltrami operator or the invariant Laplace operator. By calculation, it is easy to show that
\[(\Delta_hf)(a)=(1-|a|^2)^2\Delta f(a)+2(n-2)(1-|a|^2)\langle a,\nabla f(a)\rangle,\]
where 
\[\Delta=\frac{\partial^2}{\partial x_1^2}+\cdots+\frac{\partial^2}{\partial x_n^2},\quad\nabla=\left(\frac{\partial}{\partial x_1},\cdots,\frac{\partial}{\partial x_n}\right)\]
denote the standard Euclidean Laplacian and gradient operators, respectively.

We say that a complex-valued function $f\in C^2(\B_n)$ is $\mathcal{H}$-harmonic if $(\Delta_hf)(x)=0$ for all $x\in\B_n$. The linear space of $\mathcal{H}$-harmonic functions on $\B_n$ is denoted by $\mathcal{H}(\B_n)$. For further background and basic properties of $\mathcal{H}$-harmonic functions see \cite{Ureyen2023} and \cite{Stoll}.

Let $\nu$ be the Lebesgue measure on $\B_n$ normalized so that $\nu(\B_n)=1.$ 
For $1\leqslant p<\infty$, the $\mathcal{H}$-harmonic Bergman space $\mathcal{B}^p $ is the Banach space defined by
\[\mathcal{B}^p =\bigg\{f\in\mathcal{H}(\B_n):\int_{\B_n}|f(x)|^pd\nu (x)<\infty\bigg\}.\]
It is known that the point evaluation functionals $f\mapsto f(x)$ for every $x\in\B_n$ are bounded on $\mathcal{B}^2 $ \cite{Ureyen2023}. Therefore, $\mathcal{B}^2 $ is a reproducing kernel Hilbert space. We denote the reproducing kernel by $\mathcal{R} (x,\cdot)\in\mathcal{B}^2 $. Then, by the reproducing property, we have
\begin{equation}\label{equ1.1}
	f(x)=\int_{\B_n} f(y) \overline{\mathcal{R} (x,y)}d\nu (y),\quad\text{for all } f\in\mathcal{B}^2 .
\end{equation}
Since the reproducing kernel $\mathcal{R} (\cdot,\cdot)$ is real-valued \cite{Ureyen2023}, the conjugation in \eqref{equ1.1} can be removed. The $\mathcal{H}$-Bergman projection $P : L^2(\B_n,d\nu)\to \mathcal{B}^2 $ is given by
\[
(P  f)(x)=\int_{\B_n}f(y) \mathcal{R} (x,y)d\nu (y).
\]
It is straightforward to verify that the $\mathcal{H}$-Bergman projection $P$ is well-defined as an integrable operator for each $x\in\B_n$ and any $f\in L^1(\B_n,d\nu)$.

In fact, the invariant Laplace operator can also be defined as \eqref{1.1} on the Euclidean unit ball of $\C^n$. The space of $\mathcal{M}$-harmonic functions, which are annihilated by the invariant Laplace operator on the complex unit ball, has been studied extensively \cite{ ABC1996, Englis2024,Rudin, Stoll1}. 
However, functional theory of $\mathcal{H}$-harmonic function spaces on the real hyperbolic ball is less well-known. 
The concept of $\mathcal{H}$-harmonic functions was first introduced by Jaming in 1999 \cite{Jaming1999}. 
In the same paper, he investigated the Hardy spaces $H^p(\B_n),0<p<\infty,$ whose elements are the hyperbolic harmonic extensions of distributions belonging to the Hardy spaces $H^p(\partial \B_n)$ of the sphere and obtained an atomic decomposition of these spaces. 
In 2003, Liu and Shi studied the invariant mean-value property of $\mathcal{H}$-harmonic \cite{LiuShi2003}. 
Furthermore, similar spaces defined for holomorphic functions can also be defined for $\mathcal{H}$-harmonic functions, such as Hardy-Sobolev spaces, Lipschitz spaces \cite{Jaming2004}, and weighted Dirichlet spaces \cite{Stoll2012}. 
Additionally, Stoll obtained the reproducing kernel of the Hardy space and radial eigenfunctions of the invariant Laplacian $\Delta_h$ \cite{Stoll2019}.

This note focuses on the Bergman spaces of $\mathcal{H}$-harmonic functions. 
The Bergman projection on reproducing kernel function spaces plays a crucial role in the study of function spaces and their operators. 
Among other things, one of the key issues regarding $P $ is the study of its Calder\'{o}n-Zygmund theory, which includes the $L^p$-boundedness for all $1\leqslant p\leqslant+\infty$. 
In general, related orthogonal projections such as classical Bergman projection on holomorphic and harmonic function space on $\B_n$ are weak-type $(1,1)$ \cite{DHZZ2001}, hence, are bounded on $L^p(\B_n,d\nu)$ for $1<p<+\infty$ by interpolation. 
Indeed, \"Ureyen \cite{Ureyen2023} proved that $P$ is bounded on $L^p(\B_n,d\nu)$ if and only if $1<p<\infty$ and $P:L^\infty\to \mathcal{B}$ is bounded, where $\mathcal{B}$ is the $\mathcal{H}$-harmonic Bloch space on $\B_n$, but leaving the case $p=1$ open. Here the author proves the weak-type $(1,1)$ estimate, closing the gap.
\begin{theorem}\label{th1.1}
	The $\mathcal{H}$-harmonic Bergman projection operator $P$ is of weak-type (1,1), i.e., there exists a constant $C>0$ independent of $f$ such that for any $t>0$
	\begin{equation}\label{equ1.2}
		(P f)_*(t)\leqslant\frac{C}{t}||f||_{L^1(\B_n,d\nu)}, \qquad \text{holds for all $f\in L^1(\B_n,d\nu)$},
	\end{equation}
		where $f_*(t)=\nu (\{x\in\B_n:|f(x)|>t\})$
		is the distribution function of $f$.
\end{theorem}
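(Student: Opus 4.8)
The plan is to establish that the reproducing kernel $\mathcal{R}(x,y)$ defines a Calderón–Zygmund kernel with respect to a suitable quasi-metric on $\B_n$, and then invoke the Calderón–Zygmund theory recalled in the paper. The natural underlying structure here is the space of homogeneous type $(\B_n, \rho, d\nu)$, where $\rho$ is the pseudo-hyperbolic (Möbius-invariant) distance $\rho(x,y)=|\varphi_x(y)|$ and $\nu$ is normalized Lebesgue measure. Since $P$ is already known to be bounded on $L^2(\B_n,d\nu)$ (it is an orthogonal projection onto $\mathcal{B}^2$), the weak-type $(1,1)$ bound will follow by the standard Calderón–Zygmund machinery once I verify the requisite kernel estimates, namely a size bound and a regularity (Hörmander) bound.

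\emph{First} I would collect precise pointwise estimates on $\mathcal{R}(x,y)$ and its gradient. The key input is an explicit or asymptotic formula for the $\mathcal{H}$-harmonic Bergman kernel from \cite{Ureyen2023}; from it one extracts a size estimate of the form
\begin{equation*}
	|\mathcal{R}(x,y)|\leqslant C\,\frac{1}{\nu(B(x,\rho(x,y)))}\,,
\end{equation*}
where $B(x,r)$ denotes the $\rho$-ball, together with a gradient estimate
\begin{equation*}
	|\nabla_y\mathcal{R}(x,y)|\leqslant \frac{C}{\rho(x,y)}\,\frac{1}{\nu(B(x,\rho(x,y)))}\,.
\end{equation*}
\emph{Second}, from the gradient bound I would derive the Hörmander regularity condition: for $\rho(y,y')$ small relative to $\rho(x,y)$,
\begin{equation*}
	\int_{\rho(x,y)>2\rho(y,y')}\bigl|\mathcal{R}(x,y)-\mathcal{R}(x,y')\bigr|\,d\nu(x)\leqslant C,
\end{equation*}
the integral being controlled by the usual mean-value/chaining argument together with the doubling property of $\nu$ under $\rho$. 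These two geometric estimates, combined with the $L^2$ boundedness of $P$, feed directly into the Calderón–Zygmund decomposition to yield \eqref{equ1.2}.

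The main obstacle I anticipate is the kernel analysis itself: unlike the classical holomorphic or Euclidean-harmonic Bergman kernels, the $\mathcal{H}$-harmonic kernel $\mathcal{R}(x,y)$ does not diagonalize as cleanly, so obtaining sharp off-diagonal size and gradient estimates uniform up to the boundary will require careful use of the Möbius invariance of $\Delta_h$ and of the hypergeometric-type representation of $\mathcal{R}$. In particular I would exploit the transformation law relating $\mathcal{R}(x,y)$ to $\mathcal{R}(0,\varphi_x(y))$ to reduce all estimates to the behavior of the kernel against a single point, where the radial structure makes the decay in $\rho(x,y)$ (equivalently, in the hyperbolic distance) transparent. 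The secondary technical point is verifying that $(\B_n,\rho,d\nu)$ genuinely satisfies the doubling condition and that the $\rho$-balls have the expected volume growth $\nu(B(x,r))\asymp r^n(1-|x|^2+r)^n$ near the boundary, which is what converts the pointwise kernel bounds into the integrated Hörmander condition. Once these estimates are in hand, the passage to weak-type $(1,1)$ is routine.
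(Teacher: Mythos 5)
Your proposal follows the same skeleton as the paper's proof (Calderón--Zygmund decomposition, $L^2$-boundedness for the good part, an integrated kernel-regularity bound for the bad part), but the metric-measure structure you chose breaks the argument at two essential points. First, $(\B_n,\rho,d\nu)$ with $\rho(x,y)=|\varphi_x(y)|$ is \emph{not} a space of homogeneous type, and this is not the ``secondary technical point'' you describe --- it is fatal. A pseudo-hyperbolic ball $B_\rho(x,r)$ is a Euclidean ball of radius $\frac{r(1-|x|^2)}{1-r^2|x|^2}$, so $\nu(B_\rho(x,r))\asymp r^n(1-|x|^2)^n(1-r^2|x|^2)^{-n}$ (your asymptotic $r^n(1-|x|^2+r)^n$ is incorrect). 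Hence $\nu(B_\rho(x,1/2))\asymp(1-|x|^2)^n\to0$ as $|x|\to1$ while $B_\rho(x,1)=\B_n$ has measure $1$, so the doubling constant blows up; this is just the exponential volume growth of hyperbolic geometry compressed into the range $\rho<1$. For the same reason $(\B_n,\rho)$ is not even geometrically doubling at radii near $1$, so the Hytönen--Kairema construction \cite{Hytonen2012} and the standard CZ machinery on spaces of homogeneous type are unavailable in your setting; and large cubes cannot be avoided, since the stopping-time argument at height $t$ requires cubes of measure about $\|f\|_{L^1}/t$.

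Second, the invariant kernel estimates you plan to extract are not in \cite{Ureyen2023} and are very likely false. The known bounds are $|\mathcal{R}(x,y)|\leqslant C[x,y]^{-n}$ and $|\nabla\mathcal{R}(x,y)|\leqslant C[x,y]^{-n-1}$, where $[x,y]^2=|x-y|^2+(1-|x|^2)(1-|y|^2)$; these do not imply $|\mathcal{R}(x,y)|\lesssim\nu(B_\rho(x,\rho(x,y)))^{-1}$, because the right-hand side stays bounded as soon as $\rho(x,y)\geqslant1/2$, whereas $[x,y]^{-n}$ is unbounded there. Indeed, for the model holomorphic kernel on $\D$, where $|K(z,w)|=|1-z\bar{w}|^{-2}$ exactly, your size bound already fails: take $1-|z|=1-|w|=\delta$ and $|z-w|=\sqrt{\delta}$, so that $|K(z,w)|\asymp\delta^{-1}$ while $\nu(B_\rho(z,\rho(z,w)))\asymp1$. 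Moreover, the transformation law relating $\mathcal{R}(x,y)$ to $\mathcal{R}(0,\varphi_x(y))$ that you intend to exploit does not exist: $d\nu$ is not Möbius-invariant, and, unlike the holomorphic case, multiplying by a power of the Jacobian does not preserve $\mathcal{H}$-harmonicity --- which is exactly why Ureyen's estimates require hypergeometric analysis rather than invariance. The paper's route is much simpler than your program: it runs the CZ decomposition in the \emph{Euclidean} metric (where $\B_n$ with Lebesgue measure is doubling and Hytönen--Kairema applies), and then uses the elementary inequality $[x,y]\geqslant|x-y|$ to dominate Ureyen's bounds by the Euclidean Riesz-type bounds $C|x-y|^{-n}$ and $C|x-y|^{-n-1}$, after which the Hörmander condition is the classical Euclidean mean-value computation.
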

For a domain $\Omega\subset\R^n$ (or $\C^n$), one can see the harmonic (or holomorphic) Bergman spaces $\mathcal{B}^2(\Omega)$ (or $\mathcal{B}_a^2(\Omega)$) are reproducing kernel Hilbert spaces with kernel $K_\Omega(x,y)$ (or $k_\Omega(z,w)$).
Denote the harmonic Bergman projection by $P_\Omega: L^2(\Omega)\rightarrow\mathcal{B}^2(\Omega)$ and 
\[
	P_\Omega f(x)=\int_\Omega f(y)K_\Omega(x,y) dy
\]
for $f\in L^2(\Omega)$
(or the holomorphic Bergman projection $P_\Omega: L^2(\Omega)\rightarrow\mathcal{B}_a^2(\Omega)$ and 
\[
	P_\Omega f(z)=\int_\Omega f(z)k_\Omega(z,w) dw
\]
for $f\in L^2(\Omega)$.)

Since $P_\Omega$ is an orthogonal projection on $\mathcal{B}^2(\Omega)$ (or $\mathcal{B}_a^2(\Omega)$), it is a bounded operator on $L^2(\Omega)$. Unlike the case of the unit ball, where the Bergman projection is $L^p$-bounded for all $1<p<\infty$, determining the range $1 < p_\Omega \leqslant q_\Omega < \infty$ for which $P_\Omega$ is $L^p$-bounded on a general domain $\Omega$ is a nontrivial question. By interpolation and duality, one finds that $1/p_\Omega + 1/q_\Omega = 1$. Therefore, the $L^p$-boundedness problem of the Bergman projection for a given domain $\Omega$ reduces to determining the exponent $p_\Omega \in (1,2]$ such that $P_\Omega$ extends continuously to $L^p(\Omega)$ for all $p \in (p_\Omega, p_\Omega')$, where $p_\Omega'$ is the Hölder conjugate of $p_\Omega$. 

For a class of domains in complex plane $\C$, Solovyov characterizes the boundedness of the Bergman projection on $L^p(\Omega)$ by geometric properties of the boundary of $\Omega$ \cite{S1,S2}, which are automatically fulfilled for $p\in(\frac{4}{3},4)$.
For a simply connected domain $\Omega$ in $\C$, by conformal mapping, Hedenmalm gives a positive answer for some $p_0\in[\frac{4}{3},2)$ \cite{H}.
Here $p_0$ is independent of $\Omega$.
Edholm and McNeal extend Hedenmalm's result to some $p_\Omega$ for generalized Hartogs triangles, which are pseudoconvex but non-smooth domains in $\C^2$ \cite{EM1,EM2}.
Specifically, Chakrabarti and Zeytuncu show $p_\Omega=\frac{4}{3}$ when $\Omega$ is the classical Hartogs triangle \cite{CZ}.
For higher dimensions, such continuous extension of the Bergman projections also holds on monomial polyhedra in $\C^n$ \cite{BCEM}.

Recall that the classical holomorphic Bergman projection on $\D$ is weak-type $(1,1)$.
One may ask how the Bergman projection $P_\Omega$ behaves on $L^{p_\Omega}(\Omega)$ and $L^{p'_\Omega}(\Omega)$, where $p_\Omega$ is the endpoint of the bounded range of $P_\Omega$.
The works of Huo-Wick \cite{HW} and Christopherson-Koenig \cite{CK1,CK2,CK3} show that $P_\Omega$ is not weak-type $(p,p)$ at the lower endpoint of boundedness, while it is weak-type $(p,p)$ at the upper endpoint for the classical and generalized Hartogs triangles, respectively.
The mapping behavior of the holomorphic Bergman projection on monomial polyhedra at the lower endpoint the is even worse: there is no way to extend the Bergman projection to $L^p(\Omega)$ using its integral representation \cite[Proposition 8.1]{CE}.
In other words, there exists a function $f\in L^p(\Omega)$ such that the integral does not converge.

A similar phenomenon for the harmonic Bergman projection can be observed when $\Omega$ is a punctured smooth domain in $\R^3$.
In such setting, Koenig and Wang show that $P_\Omega$ is bounded on $L^p(\Omega)$ if and only if $\frac{3}{2}<p<3$ \cite{KW}.
For endpoints, $P_\Omega$ is weak-type $(3,3)$ but is not weak-type $(\frac{3}{2},\frac{3}{2})$.
This limitation is missing when $n\neq3$, i.e. the harmonic Bergman projection $P_\Omega$ on such punctured domains in $\R^2$ or $\R^n$, $n\geqslant4$, is always bounded on $L^p(\Omega)$ for $1<p<\infty$.
Besides, $P_\Omega$ is not weak-type $(1,1)$ for $n=2$, while $P_\Omega$ is weak-type $(1,1)$ for $n\geqslant4$.

\section{A dyadic decomposition of the unit ball $\B_n$}
The classical Calder\'on-Zygmund decomposition on $\R^n$ relies on a dyadic system in $\R^n$, which is a useful tool for establishing the weak (1,1) boundedness of singular integrals. 
Similarly, to estimate $P$, we require a dyadic system of the unit ball $\B_n$. 
A collection of sets 
\[
	\mathscr{D}=\{Q_{k,i}\subset\B_n: 1\leqslant k<\infty,1\leqslant i\leqslant M(k)\}
\]
is called a dyadic system on $\B_n$ if 
\begin{enumerate}[{(i)}]
	\item 
	for each integer $k\geqslant 1$, $\{Q_{k,i}\}_{i=1}^{M(k)}$ is a disjoint covering of $\B_n$, i.e.,
	\begin{itemize}
		\item $\B_n=\bigcup\limits_{i=1}^{M(k)} Q_{k,i}$ and 
		\item for all $1\leqslant i\neq j\leqslant M(k), Q_{k,i}\cap Q_{k,j}=\emptyset $.
	\end{itemize}
	\item if $1\leqslant k\leqslant l$, then either $Q_{l,j}\subset Q_{k,i}$ or $Q_{k,i}\cap Q_{l,j}=\emptyset$.
\end{enumerate}

Let $\rho(x,y)=|x-y|$ be the Euclidean metric.
For $x\in\B_n,\ r>0$, denote
\[
	B(x,r)=\{y\in\B_n:|x-y|<r\}.
\]
We say the dyadic system $\mathscr{D}$ is associated with a triple of positive constants $(\eta,\kappa_0,\kappa_1)$ if there exists a point set 
\[
	\mathcal{P}=\{x_{k,i}\in\B_n:1\leqslant k<\infty,1\leqslant i\leqslant M(k)\}
\] 
such that 
\begin{equation*}
	B(x_{k,i},\kappa_0\eta^k)\subset Q_{k,i}\subset B(x_{k,i},\kappa_1\eta^k)
\end{equation*}
for each $k\geqslant 1, 1\leqslant i\leqslant M(k)$.
Here $x_{k,i}$ is called the center of the cube $Q_{k,i}$.
Denote $B(x_{k,i},\kappa_1\eta^k)$ by $B(Q_{k,i})$ for each $k$ and $i$.
By Hyt\"onen and Kairema's result (\cite[Theorem 4.1]{Hytonen2012}), we have the following result on metric space $(\B_n,\rho)$:
\begin{theorem}\label{dyadic-thm}
There is a dyadic system $\mathscr{D}$ associated with the triple $(1/96,1/12,4)$ on $(\B_n,\rho)$.
\end{theorem}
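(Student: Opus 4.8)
The plan is to obtain the statement as a direct application of the abstract construction of dyadic systems in geometrically doubling metric spaces, namely \cite[Theorem 4.1]{Hytonen2012}, specialized to $(\B_n,\rho)$. Before that theorem can be invoked, two hypotheses have to be confirmed: first, that $(\B_n,\rho)$ is a geometrically doubling metric space; and second, that the prescribed triple $(\eta,\kappa_0,\kappa_1)=(1/96,1/12,4)$ lies in the admissible parameter range of the construction. Granting these, the remaining work is purely organizational: matching the two-sided, a priori countable indexing of the abstract theorem to the one-sided indexing $1\le k<\infty$, $1\le i\le M(k)$ with finite $M(k)$ demanded here, and checking that the resulting cubes form an exact partition at each scale.

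First I would establish the geometric doubling property, which $\B_n$ inherits from the ambient Euclidean space. Since $\rho$ is the restriction of the Euclidean metric, each ball $B(x,r)=\{y\in\B_n:|x-y|<r\}$ is the trace on $\B_n$ of the Euclidean ball of radius $r$ centered at $x$. The space $\R^n$ is geometrically doubling with a constant $N=N(n)$ depending only on the dimension (a standard volume--packing estimate bounds the number of $(r/2)$-separated points inside a Euclidean ball of radius $r$). Covering the Euclidean ball by $N$ Euclidean balls of radius $r/2$ and intersecting each with $\B_n$ produces a cover of $B(x,r)$ by at most $N$ balls of $(\B_n,\rho)$ of radius $r/2$. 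Hence $(\B_n,\rho)$ is geometrically doubling with a constant depending only on $n$, which is exactly the structural input required by \cite{Hytonen2012}.

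Next I would verify the admissibility of the parameters and invoke the theorem. Writing $\delta=\eta=1/96$, $c_0=\kappa_0=1/12$ and $C_0=\kappa_1=4$, the compatibility condition of \cite[Theorem 4.1]{Hytonen2012}---a smallness requirement on $\delta$ relative to the ratio $c_0/C_0$---reduces to a one-line arithmetic check: here $C_0\delta=1/24$ while $c_0=1/12=2C_0\delta$, so the triple sits exactly on the boundary of the admissible region. With this in hand, \cite{Hytonen2012} furnishes, starting from a maximal $\eta^k$-separated net at each scale $k$, both the point set $\mathcal{P}=\{x_{k,i}\}$ and a family of Borel sets $Q_{k,i}$ that partition $\B_n$ at each fixed scale, are nested across scales, and satisfy $B(x_{k,i},\kappa_0\eta^k)\subset Q_{k,i}\subset B(x_{k,i},\kappa_1\eta^k)$. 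This is precisely the list of properties (i), (ii) and the triple-association containment in the definition above.

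The step I expect to require the most care---though it is a reconciliation rather than a genuine obstacle---is the passage from the abstract indexing to the normalization stated here. The construction of \cite{Hytonen2012} is indexed by all $k\in\Z$, but $\B_n$ has finite diameter $2$; for every sufficiently negative $k$ (those with $\eta^k\ge 2$) the whole ball is a single cube, so discarding these coarse trivial levels and relabelling lets me begin the hierarchy at $k=1$ without disturbing the covering and nesting properties. At each fixed scale the centers $\{x_{k,i}\}$ form an $\eta^k$-separated subset of the bounded set $\B_n$, so total boundedness (equivalently, the geometric doubling just established) forces their number $M(k)$ to be finite, yielding the finiteness claimed in the statement. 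Finally I would confirm that the cubes of \cite{Hytonen2012} constitute a genuinely pairwise disjoint, exact partition at each level, so that condition (i) holds verbatim rather than merely up to boundary overlaps. I anticipate no difficulty beyond this bookkeeping.
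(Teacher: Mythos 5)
Your overall route is the same as the paper's: Theorem \ref{dyadic-thm} is not proved in the paper at all, but quoted directly from Hyt\"onen--Kairema \cite[Theorem 4.1]{Hytonen2012} specialized to $(\B_n,\rho)$, and your first and last steps (geometric doubling of $(\B_n,\rho)$ inherited from $\R^n$; truncating the scales $k\in\Z$ to $k\geqslant 1$ and noting $M(k)<\infty$ because an $\eta^k$-separated set in a bounded space is finite; exact disjointness of the half-open cubes) are precisely the routine verifications that the paper leaves implicit. One minor imprecision in the doubling step: the trace on $\B_n$ of a Euclidean ball centered \emph{outside} $\B_n$ is not a ball of the metric space $(\B_n,\rho)$, so you must re-center (cover $B(x,r)$ by Euclidean balls of radius $r/4$, discard those missing $B(x,r)\cap\B_n$, and replace each remaining one by a ball of radius $r/2$ centered at a point of the intersection); this only worsens the doubling constant and is harmless.

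The step that is genuinely wrong is your ``admissibility check.'' In \cite{Hytonen2012} the constants subject to a smallness condition are the separation and covering constants $(c_0,C_0)$ of the underlying nets of center points --- the hypothesis of their Theorem 2.2 is of the form $12\,C_0\delta\leqslant c_0$ --- whereas the numbers $1/12$ and $4$ in the triple of Theorem \ref{dyadic-thm} are the \emph{inner and outer ball constants of the resulting cubes}, i.e.\ conclusion data, not input data. You have identified $(c_0,C_0)$ with $(\kappa_0,\kappa_1)=(1/12,4)$ and then verified a relation ($c_0=2C_0\delta$, ``exactly on the boundary'') that appears nowhere in \cite{Hytonen2012}; had you inserted this identification into the actual hypothesis $12\,C_0\delta\leqslant c_0$, you would have obtained $1/2\leqslant 1/12$, which is false. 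So as written, this step either checks a nonexistent condition or fails. The reason your proof nonetheless survives is that no such check is needed: \cite[Theorem 4.1]{Hytonen2012} is stated for an arbitrary geometrically doubling metric space, its only hypothesis is the doubling property, and its conclusion already carries exactly the constants $\delta=1/96$, inner constant $1/12$, outer constant $4$ (these values are produced internally by applying the Theorem 2.2 machinery to suitable nets, which is where the otherwise odd-looking $1/96$ comes from). The correct repair is simply to delete the admissibility paragraph and take any one of the dyadic systems furnished by \cite[Theorem 4.1]{Hytonen2012}; combined with your doubling verification and reindexing bookkeeping, this gives the statement, and it is exactly what the paper does.
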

\begin{remark}\label{remark2.1}
	We observe that for each $Q_{k,i}$ there exists a collection of sets $\{Q_{k+1,j}\}$ such that 
	\[Q_{k,i}=\bigcup\limits_{j}Q_{k+1,j},\]
	and each such cube $Q_{k+1,j}$ is called child of $Q_{k,i}.$
	Additionally, there exists a constant $C_1>0$ satisfying 
	\[\nu(Q_{k,i})\leqslant C_1\nu(Q_{k+1,j})\]
	for each cube $Q_{k,i}\in\mathscr{D}$ and its child $Q_{k+1,j}$.
\end{remark}

Now we introduce the Calder\'on-Zygmund decomposition on $\B_n$ associated with the dyadic system $\mathscr{D}$.
\begin{lemma}\label{C-Z}
	For $f\in L^1(\B_n,d\nu)$ and $t>0$, there exists a decomposition of $\B_n$ such that
	\begin{enumerate}[{(i)}]
		\item $\B_n=F\cup\Omega,$ with $ F\cap\Omega=\emptyset$;
		\item $|f(x)|\leqslant t$ on $F$ almost everywhere;
		\item $\Omega$ can be expressed as a union of pairwise disjoint cubes from the dyadic system $\mathscr{D}$ in Theorem \ref{dyadic-thm}. 
		That is, there exists a collection $G$ of such cubes such that
		\[\Omega=\bigcup\limits_{Q\in G} Q.\]
		And for each $Q\in G$, we have
		\[t<\frac{1}{\nu (Q)}\int_{Q}|f(x)|d\nu (x)<C_1t.\]
	\end{enumerate}
	Here, $C_1$ is the constant in Remark \ref{remark2.1} and independent of $f$ and $t$.
\end{lemma}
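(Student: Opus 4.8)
The plan is to run the classical Calderón–Zygmund stopping-time (maximal dyadic cube) selection, transplanted to the dyadic system $\mathscr{D}$ of Theorem~\ref{dyadic-thm}. Fix $t>0$ and call a cube $Q\in\mathscr{D}$ \emph{heavy} if $\frac{1}{\nu(Q)}\int_Q|f|\,d\nu>t$. Let $G$ be the collection of \emph{maximal} heavy cubes, those heavy $Q$ contained in no strictly larger heavy cube. A maximal heavy cube exists above every heavy cube because each $Q_{k,i}$ has only finitely many dyadic ancestors (one at each scale $1,\dots,k-1$), so the chain of heavy ancestors terminates and the coarsest heavy ancestor is maximal. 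Set $\Omega=\bigcup_{Q\in G}Q$ and $F=\B_n\setminus\Omega$, which gives (i).

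Next I would record (iii). Each $Q\in G$ is heavy, which is exactly the lower bound $t<\frac{1}{\nu(Q)}\int_Q|f|\,d\nu$. If $Q,Q'\in G$ are distinct, the nesting property of $\mathscr{D}$ forces them to be nested or disjoint, and nesting is excluded by maximality; hence the cubes of $G$ are pairwise disjoint and $\Omega=\bigcup_{Q\in G}Q$ is the asserted disjoint union. For the upper bound, write $\widehat Q$ for the dyadic parent of a scale-$k$ cube $Q$ (the scale-$(k-1)$ cube containing it). When $\widehat Q$ exists, maximality of $Q$ means $\widehat Q$ is not heavy, and therefore
\[
\frac{1}{\nu(Q)}\int_Q|f|\,d\nu\leqslant\frac{1}{\nu(Q)}\int_{\widehat Q}|f|\,d\nu=\frac{\nu(\widehat Q)}{\nu(Q)}\cdot\frac{1}{\nu(\widehat Q)}\int_{\widehat Q}|f|\,d\nu\leqslant\frac{\nu(\widehat Q)}{\nu(Q)}\,t\leqslant C_1t,
\]
the last step being the doubling estimate $\nu(\widehat Q)\leqslant C_1\nu(Q)$ of Remark~\ref{remark2.1}, since $Q$ is a child of $\widehat Q$.

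For (ii), observe that a point $x\in F$ lies in no member of $G$; as every heavy cube sits inside a maximal one, the unique scale-$k$ cube $Q_k(x)\in\mathscr{D}$ containing $x$ must be non-heavy for \emph{every} $k$, i.e. $\frac{1}{\nu(Q_k(x))}\int_{Q_k(x)}|f|\,d\nu\leqslant t$ for all $k$. The cubes $Q_1(x)\supset Q_2(x)\supset\cdots$ are nested by the same property and have $\operatorname{diam}Q_k(x)\leqslant 2\kappa_1\eta^k\to0$ from the containment $Q_{k,i}\subset B(x_{k,i},\kappa_1\eta^k)$. Since $\nu$ is doubling on $(\B_n,\rho)$ and each cube is comparable to a ball, the Lebesgue differentiation theorem along the dyadic filtration gives $\frac{1}{\nu(Q_k(x))}\int_{Q_k(x)}|f|\,d\nu\to|f(x)|$ for almost every $x$; letting $k\to\infty$ in the inequality yields $|f(x)|\leqslant t$ for a.e. $x\in F$.

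The one genuine obstacle is the top scale $k=1$: a heavy root $Q_{1,i}$ has no parent, so the upper-bound step fails for it. I would dispose of this by using that $M(1)<\infty$, whence $c_0:=\min_{1\leqslant i\leqslant M(1)}\nu(Q_{1,i})>0$ and every root average obeys $\frac{1}{\nu(Q_{1,i})}\int_{Q_{1,i}}|f|\,d\nu\leqslant\|f\|_{L^1(\B_n,d\nu)}/c_0$. Thus for $t\geqslant\|f\|_{L^1(\B_n,d\nu)}/c_0$ no root is heavy, every $Q\in G$ has a non-heavy parent, and the construction above proves the lemma verbatim. The complementary range $t<\|f\|_{L^1(\B_n,d\nu)}/c_0$ need never be fed into Lemma~\ref{C-Z} when proving Theorem~\ref{th1.1}, since there $\nu(\{|Pf|>t\})\leqslant\nu(\B_n)=1\leqslant\|f\|_{L^1(\B_n,d\nu)}/(c_0t)$ already delivers the weak-type bound; so it suffices to apply the decomposition in the regime where all selected cubes have non-heavy parents.
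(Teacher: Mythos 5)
Your proof is correct, and it is essentially the route the paper intends: the paper supplies no argument of its own for Lemma \ref{C-Z}, deferring to the classical stopping-time construction in Stein and Grafakos, which is exactly your selection of maximal heavy cubes --- lower bound from heaviness, upper bound from the non-heavy parent together with the doubling estimate of Remark \ref{remark2.1}, disjointness from maximality plus dyadic nesting, and (ii) from Lebesgue differentiation (valid here since $\nu$ is doubling on $(\B_n,\rho)$ by convexity of the ball and each cube is sandwiched between comparable balls).

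Where you go beyond the paper is the top-scale issue, and this is a genuine catch rather than pedantry. On the finite-measure space $\B_n$ the dyadic system has coarsest-scale cubes with no parents, and the lemma as literally stated fails for small $t$: take $f\equiv 1$ and $t<1/C_1$; then (ii) forces $F$ to be null, so $\Omega$ must be a disjoint union of cubes, yet every cube has average exactly $1\geqslant C_1t$, so no admissible collection $G$ exists. The proofs the paper cites live on $\R^n$, where arbitrarily coarse scales are available and $f\in L^1(\R^n)$ makes large-cube averages small, so this failure mode is invisible there; it does not transplant automatically to $\B_n$. Your repair --- setting $c_0=\min_{1\leqslant i\leqslant M(1)}\nu(Q_{1,i})>0$, proving the lemma for $t\geqslant \|f\|_{L^1(\B_n,d\nu)}/c_0$, and disposing of the complementary range in the application by the trivial bound $\nu(\{|Pf|>t\})\leqslant 1\leqslant \|f\|_{L^1(\B_n,d\nu)}/(c_0t)$ --- is exactly what is needed to make the use of the lemma in Theorem \ref{th1.1} rigorous. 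Note in particular that the paper's own reduction to $t\geqslant\|f\|_{L^1(\B_n,d\nu)}$ does not by itself exclude heavy root cubes, since a root average can exceed $\|f\|_{L^1(\B_n,d\nu)}$ whenever $\nu(Q_{1,i})<1$; your stronger threshold (or an equivalent restatement of the lemma with the upper bound in (iii) waived for root cubes) closes that gap.
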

For the proof, one can see \cite[\S 1, 3.2]{Stein} or \cite[\S 5.3]{GTM}.

\section{The proof of Theorem \ref{th1.1}}
For a fixed $f\in L^1(\B_n,d\nu)$, the inequality \eqref{equ1.2} holds for all $t\leqslant||f||_{L^1(\B_n,d\nu)}$ since
\[(P  f)_*(t)=\nu (\{x\in\B_n:|(P  f)(x)|>t\})\leqslant 1 \leqslant\frac{||f||_{L^1(\B_n,d\nu)}}{t}.\]
Thus, it suffices to consider the case $t\geqslant||f||_{L^1(\B_n,d\nu)}.$ 

For a fixed $f\in L^1(\B_n,d\nu),$ we decompose $f$ into a ``good'' function $g$ and a ``bad'' function $b$ by Lemma \ref{C-Z}. Let 
\begin{equation*}
    g(x) = \left\{
    \begin{aligned}
	    &f(x)&& x\in F ,\\
	    &\frac{1}{\nu (Q_j)}\int_{Q_j}f(x)d\nu (x)\quad&&x\in Q_j\subset \Omega.
    \end{aligned} \right.
\end{equation*}
Then $g$ is bounded by Lemma \ref{C-Z}. Define $b(x)=f(x)-g(x)$. Then 
\[\{x\in\B_n:|P  f(x)|>t\}\subset\{x\in\B_n:|P  g(x)|>\frac{t}{2}\}\cup\{x\in\B_n:|P  b(x)|>\frac{t}{2}\}.\]
It suffices to prove 
\[(P  g)_*(t)\leqslant\frac{C}{t}||f||_{L^1(\B_n,d\nu)}\quad\mbox{and}\quad (P  b)_*(t)\leqslant\frac{C}{t}||f||_{L^1(\B_n,d\nu)}.\]

\textit{The estimate of $Pg$}:

By Lemma \ref{C-Z}, we have
\begin{equation*}
	\begin{aligned}
		||g||_{L^2(\B_n,d\nu)}^2&=\int_{F}|g(x)|^2d\nu (x)+\int_{\Omega}|g(x)|^2d\nu (x)\\
		&\leqslant t\int_{F}|f(x)|d\nu +\sum\limits_{Q\in G}\int_{\Omega}\left|\frac{1}{\nu (Q)}\int_{Q}f(x)d\nu (x)\right|^2\chi_Q(x)d\nu (x)\\
		&\leqslant t||f||_{L^1(\B_n,d\nu)}+C_1 t\sum\limits_{Q\in G}\int_{Q}\left|f(x)\right|d\nu (x)\\
		&\leqslant t||f||_{L^1(\B_n,d\nu)}+C_1 t\int_{\Omega}|f(x)|d\nu (x)\\
		&\leqslant(C_1+1)  t||f||_{L^1(\B_n,d\nu)}
	\end{aligned}
\end{equation*}
Since $P$ is bounded on $L^2(\B_n,d\nu)$, $P$ is of weak-type (2,2). 
Hence, we have
\[(P  g)_*(t)\leqslant\frac{1}{t^2}\int_{\B_n}|g(x)|^2d\nu (x)\leqslant \frac{(C_1+1)}{t}||f||_{L^1(\B_n,d\nu)}.\]

\textit{The estimate of $Pb$}:

The following theorem from \cite{Ureyen2023} is required to estimate the distribution of the function $P b$.

\begin{theorem}\label{th2.1}
	There exists a constant $C_2>0$ depending only on $n$ such that for all $x,y\in\B_n$,
	\begin{enumerate}[{(i)}]
		\item $|\mathcal{R} (x,y)|\leqslant\frac{C_2}{[x,y]^{n}},$
		\item $|\nabla_x\mathcal{R} (x,y)|\leqslant\frac{C_2}{[x,y]^{n+1}}.$
	\end{enumerate}
	Here $[x,y]^2=|x-y|^2+(1-|y|^2)(1-|x|^2)$ and $\nabla_x$ denotes the gradient operator with respect to the variable $x$.
\end{theorem}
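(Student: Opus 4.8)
The plan is to obtain a workable expression for $\mathcal{R}$ by separating variables for $\Delta_h$ and then to read off both estimates from the geometry of $[x,y]$. Writing $x=r\xi$ with $r=|x|$ and $\xi\in\partial\B_n$, I would first expand each $f\in\mathcal{B}^2$ as $f(r\xi)=\sum_{k\geqslant0}\sum_m c_{k,m}\,u_k(r)Y_{k,m}(\xi)$, where $\{Y_{k,m}\}_m$ is a real orthonormal basis of degree-$k$ spherical harmonics and $u_k$ is the solution, regular at the origin, of the radial ODE produced by inserting $u(r)Y_{k,m}$ into $\Delta_h f=0$; after the substitution $u=r^k w(r^2)$ this $u_k$ is a hypergeometric function of $r^2$ with $u_k(r)\sim r^k$ near $0$. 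These products are mutually orthogonal in $L^2(\B_n,d\nu)$, so normalizing by $c_k:=\int_0^1 u_k(r)^2 r^{n-1}\,dr$ and using the addition theorem $\sum_m Y_{k,m}(\xi)Y_{k,m}(\eta)=Z_k(\xi,\eta)$ (a zonal harmonic, i.e.\ a multiple of a Gegenbauer polynomial in $\langle\xi,\eta\rangle$), the kernel becomes
\[ \mathcal{R}(x,y)=\sum_{k=0}^{\infty}\frac{u_k(|x|)\,u_k(|y|)}{c_k}\,Z_k\!\left(\tfrac{x}{|x|},\tfrac{y}{|y|}\right). \]

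The heart of the argument is to sum this series uniformly on $\B_n\times\B_n$ and to extract the precise order $[x,y]^{-n}$. I would establish, as $k\to\infty$, sharp two-sided asymptotics of the radial factors $u_k(r)$ and of the normalizing constants $c_k$ (the growth in $k$ coming from the hypergeometric ODE and from the behaviour of $u_k$ near $r=1$), together with the classical bounds for $Z_k$, whose size grows like $k^{n-2}$ and concentrates as $\langle\xi,\eta\rangle\to1$. Feeding these into the series and balancing the polynomial factors in $k$ against the radial decay, the sum should be comparable to an elementary expression in $|x|,|y|$ and $\langle x',y'\rangle$; rewriting $|x-y|^2=|x|^2+|y|^2-2|x||y|\langle x',y'\rangle$ and collecting terms, I expect to identify it with a constant multiple of $[x,y]^{-n}$, which is (i). I anticipate this uniform asymptotic analysis to be the main obstacle: one must control the special functions simultaneously as $|x|,|y|\to1$ for all angles and sum to recover exactly the right singular power, the delicate case being the boundary diagonal $|x|,|y|\to1$ with $\xi\to\eta$, where $[x,y]\to0$.

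To organize these estimates I would exploit the Möbius structure. The identities $1-|\varphi_a(x)|^2=(1-|a|^2)(1-|x|^2)/[x,a]^2$ and $|J_{\varphi_a}(x)|=\big((1-|a|^2)/[x,a]^2\big)^n$, together with the invariance $\Delta_h(f\circ\varphi_a)=(\Delta_h f)\circ\varphi_a$, let me normalize one of the two points (for instance send $y$ to $0$, where $[x,0]\equiv1$ and $\mathcal{R}(x,0)$ is bounded and radial) and track how each side transforms. Since $d\nu$ is not Möbius invariant the covariance is not exact, because the weight $|J_{\varphi_a}|^{1/2}$ destroys $\mathcal{H}$-harmonicity; this reduction therefore only governs the geometry of $[x,y]$ and the comparability of $[\,\cdot\,,y]$ over fixed hyperbolic balls, and must be combined with the asymptotics above rather than used to close the estimate on its own.

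For the gradient bound (ii) I would split into two regimes. When $|x-y|\lesssim 1-|x|^2$ one has $[x,y]^2\approx(1-|x|^2)(1-|y|^2)\approx(1-|x|^2)^2$ and $[\,\cdot\,,y]$ is comparable to $[x,y]$ on the Euclidean ball $B(x,c(1-|x|^2))$, so the interior gradient estimate for the elliptic operator $\Delta_h$ gives $|\nabla_x\mathcal{R}(x,y)|\lesssim (1-|x|^2)^{-1}\sup_{B}|\mathcal{R}(\,\cdot\,,y)|\lesssim(1-|x|^2)^{-1}[x,y]^{-n}\approx[x,y]^{-(n+1)}$ by (i). In the complementary regime $|x-y|\gtrsim 1-|x|^2$ the interior estimate is too lossy, and I would instead differentiate the representation above term by term: $\nabla_x$ brings down a factor of order $k$ and shifts the effective singular order by one, so the same summation as in (i), applied now to $\partial_r u_k$ and $\nabla_\xi Z_k$, yields $[x,y]^{-(n+1)}$ directly. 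The new ingredient here is thus a refined version of the uniform asymptotics of the second paragraph.
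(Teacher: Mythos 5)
First, a point of comparison you could not have known: the paper contains no proof of this statement at all. Theorem \ref{th2.1} is quoted verbatim from \"Ureyen \cite{Ureyen2023}, so the only meaningful benchmark is the proof given there, and your outline does follow the same broad route as that reference: $\mathcal{H}$-harmonic functions expand in spherical harmonics with hypergeometric radial parts (up to normalization, $u_k(r)=r^kF\bigl(k,1-\tfrac n2;k+\tfrac n2;r^2\bigr)$), and $\mathcal{R}$ is the zonal series $\sum_k c_k^{-1}u_k(|x|)u_k(|y|)Z_k$ whose size must then be estimated. The genuine gap is that your argument stops exactly where the theorem begins. What you actually carry out --- the orthogonal expansion, the addition theorem, the M\"obius identities (correctly flagged as unable to close the estimate, since $d\nu$ is not invariant), and the rescaled interior gradient estimate reducing (ii) to (i) when $|x-y|\lesssim 1-|x|$, which is a correct and useful reduction --- is the routine part; the uniform large-$k$ analysis of $u_k$, $c_k$ and the summation producing the powers $[x,y]^{-n}$ and $[x,y]^{-(n+1)}$ are only announced (``I would establish'', ``I expect''), and they are the entire content of the theorem.

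Moreover, the summation scheme as you describe it cannot succeed. Balancing sizes --- ``the polynomial factors in $k$ against the radial decay'' --- means estimating the series by absolute values, and $|Z_k(\xi,\eta)|\lesssim k^{n-2}$ with $u_k(|x|)u_k(|y|)/c_k\lesssim k\,(|x||y|)^k$ yields only $\sum_k k^{n-1}(|x||y|)^k\approx(1-|x||y|)^{-n}$; this is genuinely weaker than $[x,y]^{-n}$ (take $|x|=|y|\to1$ at a fixed positive angle: $[x,y]$ stays bounded below while $1-|x||y|\to0$). To see $[x,y]$ one must exploit the oscillation of $Z_k$ in the angle, i.e.\ sum the series exactly against the Gegenbauer generating function and its derivatives, which in turn requires splitting $u_k(|x|)u_k(|y|)/c_k$ into polynomial-in-$k$ main coefficients plus remainders of lower order in $k$; that decomposition is the missing core. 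Finally, your stated target --- to ``identify'' the sum with a constant multiple of $[x,y]^{-n}$ --- is false, not merely unproven: evaluating at $x=0$ (where $[0,y]\equiv1$ and $\mathcal{R}(0,\cdot)\equiv1$ by the mean-value property) would force the constant to be $1$, and then reproducing the constant function $1$ would require $\int_{\B_n}[x,y]^{-n}d\nu(y)=1$ for all $x$, whereas this integral diverges logarithmically as $|x|\to1$ (note $[x,y]=|x|\,\bigl|y-x/|x|^2\bigr|$). Indeed such kernels change sign (already the Euclidean harmonic Bergman kernel of $\B_n$ is negative at $y=-x$ for $|x|$ near $1$), so even two-sided comparability with $[x,y]^{-n}$ is hopeless; statement (i) is an upper bound in the presence of cancellation, and any correct proof must be organized around that fact.
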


We denote the cubes in $G$ by $\{Q_j\}_{j=1}^\infty$.
Then $\Omega=\bigcup\limits_{j=1}^\infty Q_j$.
For $Q_j\in G$, let $b_j(x)=b(x)\chi_{Q_j}(x)$. Then $b(x)=\sum\limits_{j=1}^\infty b_j(x)$ and
\[
	\int_{Q_j}b(x)d\nu (x)=0.
\]
For each $j$, set $y_j$ is the center of the cube $Q_j$.
Then we have
\begin{equation*}
	\begin{aligned}
		|(P  b_j)(x)|&=\left|\int_{Q_j}\mathcal{R} (x,y)b(y)d\nu (y)\right|\\
		&=\left|\int_{Q_j}(\mathcal{R} (x,y)-\mathcal{R} (x,y_j))b(y)d\nu (y)\right|\\
		&\leqslant\int_{Q_j}|\mathcal{R} (x,y)-\mathcal{R} (x,y_j)| |b(y)|d\nu (y)\\
	\end{aligned}
\end{equation*}
Recall the notion of $B(Q_j)$ introduced in the discussion at the beginning of Section 2.
Let $B_j=2B(Q_j)$ and $\Omega'=\bigcup\limits_{j}B_j$. 
By the definition of $B(Q_j)$ and Lemma \ref{C-Z}, there exists a constant $C_3$ independent of $f$ and $t$ such that 
\[\nu (\Omega')
\leqslant \sum\limits_{j=1}^\infty \nu (B_j)
\leqslant C_3 \sum\limits_{j=1}^\infty \nu (Q_j)
\leqslant \frac{C_3}{t}\sum\limits_{j=1}^\infty\int_{Q_j}|f(x)|d\nu (x)
\leqslant \frac{C_3}{t}||f||_{L^1(\B_n,d\nu)}.\]
Then we have
\begin{equation}\label{**}
	\begin{aligned}
		\int_{\B_n\backslash\Omega'}|(P  b)(x)|d\nu (x)\leqslant&\sum\limits_{j=1}^\infty\int_{\B_n\backslash B_j}|(P  b_j)(x)|d\nu (x)\\
		\leqslant&\sum\limits_{j=1}^\infty\int_{Q_j}|b(y)|\int_{\B_n\backslash B_j}|\mathcal{R} (x,y)-\mathcal{R} (x,y_j)|d\nu (x)d\nu (y)\\
	\end{aligned}
\end{equation}

\begin{claim}\label{claim2.1}
	There exists a constant $C_4$ depending only on $n$ such that
	\[\int_{\B_n\backslash B_j}|\mathcal{R} (x,y)-\mathcal{R} (x,y_j)|d\nu (x)\leqslant C_4\]
	for each $j\in\N$.
\end{claim}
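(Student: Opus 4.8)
The plan is to establish the standard H\"ormander-type regularity estimate of Calder\'on--Zygmund theory, exploiting the gradient bound in Theorem~\ref{th2.1}(ii) together with the symmetry of the kernel. First I would record two preliminary facts. Since $\mathcal{R}$ is a real symmetric reproducing kernel we have $\mathcal{R}(x,y)=\mathcal{R}(y,x)$, and the quantity $[x,y]$ is manifestly symmetric in its two arguments; hence the gradient estimate in Theorem~\ref{th2.1}(ii) transfers verbatim to the second variable,
\[
	|\nabla_y\mathcal{R}(x,y)|\leqslant\frac{C_2}{[x,y]^{n+1}}.
\]
Because $\mathcal{R}$ is complex-valued I would avoid the mean value theorem and instead invoke the fundamental theorem of calculus along the segment $\gamma(s)=y_j+s(y-y_j)$, which remains in $\B_n$ by convexity, giving
\[
	|\mathcal{R}(x,y)-\mathcal{R}(x,y_j)|\leqslant|y-y_j|\sup_{s\in[0,1]}|\nabla_y\mathcal{R}(x,\gamma(s))|.
\]

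Next I would carry out the geometric comparison. Writing $r_j=\kappa_1\eta^{k}$ for the radius of $B(Q_j)$, the inclusion $Q_j\subset B(Q_j)$ yields $|y-y_j|<r_j$, and therefore $|\gamma(s)-y_j|\leqslant|y-y_j|<r_j$ for all $s\in[0,1]$. For $x\in\B_n\backslash B_j$ with $B_j=2B(Q_j)=B(y_j,2r_j)$ we have $|x-y_j|\geqslant 2r_j$, so $r_j\leqslant\tfrac12|x-y_j|$ and the triangle inequality gives $|x-\gamma(s)|\geqslant|x-y_j|-|\gamma(s)-y_j|\geqslant\tfrac12|x-y_j|$. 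Since $[x,\gamma(s)]^2\geqslant|x-\gamma(s)|^2$, combining these estimates produces the pointwise bound
\[
	|\mathcal{R}(x,y)-\mathcal{R}(x,y_j)|\leqslant\frac{C_2\,r_j}{(\tfrac12|x-y_j|)^{n+1}}=\frac{2^{n+1}C_2\,r_j}{|x-y_j|^{n+1}}.
\]

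Finally I would integrate this bound over $\B_n\backslash B_j\subset\{x\in\R^n:|x-y_j|\geqslant 2r_j\}$. Passing to the normalized measure $d\nu=dx/\nu_0$, with $\nu_0$ the Euclidean volume of $\B_n$, and using polar coordinates centered at $y_j$,
\[
	\int_{\B_n\backslash B_j}\frac{d\nu(x)}{|x-y_j|^{n+1}}\leqslant\frac{1}{\nu_0}\int_{|x-y_j|\geqslant 2r_j}\frac{dx}{|x-y_j|^{n+1}}=\frac{\omega_{n-1}}{\nu_0}\int_{2r_j}^{\infty}\frac{ds}{s^{2}}=\frac{\omega_{n-1}}{2\nu_0\,r_j},
\]
where $\omega_{n-1}$ is the surface area of the unit sphere. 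Multiplying by the prefactor $2^{n+1}C_2 r_j$, the factor $r_j$ cancels, leaving the $j$-independent constant $C_4=2^{n}C_2\omega_{n-1}/\nu_0$, which depends only on $n$. The only genuinely delicate points are transferring the gradient estimate to the second slot via symmetry and verifying the comparison $[x,\gamma(s)]\geqslant\tfrac12|x-y_j|$ uniformly along the segment; once the factor $r_j$ coming from the kernel difference is matched against the $r_j^{-1}$ produced by the tail integral, the scale invariance that makes $C_4$ uniform in $j$ is automatic.
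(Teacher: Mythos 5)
Your proof is correct and follows essentially the same route as the paper: a first-order estimate along the segment from $y_j$ to $y$ using the gradient bound of Theorem~\ref{th2.1}(ii) in the second variable, the comparison $[x,\gamma(s)]\geqslant|x-\gamma(s)|\geqslant\tfrac12|x-y_j|$ for $x\notin B_j$, and the convergent tail integral in which the scale factor cancels to give a constant depending only on $n$. The differences are cosmetic: you use the fundamental theorem of calculus and the radius $r_j$ where the paper uses the mean value theorem (which is legitimate here, since $\mathcal{R}$ is real-valued, contrary to your parenthetical worry) together with the factor $|y-y_j|$, and you make explicit the symmetry transfer of the gradient estimate to the second slot, which the paper invokes tacitly.
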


\begin{proof}
By the mean value theorem and Theorem \ref{th2.1}, there exists a point $\bar{y_j}$ on the straight-line segment connecting $y_j$ and $y$ such that
\[|\mathcal{R} (x,y)-\mathcal{R} (x,y_j)|=|\nabla_y\mathcal{R} (x,\bar{y_j})| |y-y_j|\leqslant\frac{C_2}{[x,\bar{y_j}]^{n+1}}|y-y_j|.\]
Then 
\[
	\int_{\B_n\backslash B_j}|\mathcal{R} (x,y)-\mathcal{R} (x,y_j)|d\nu (x)\leqslant\int_{\B_n\backslash B_j}\frac{C_2}{[x,\bar{y_j}]^{n+1}}|y-y_j|d\nu(x).
\]
Note that $x\in\B_n\backslash B_j$ and $\bar{y_j}\in Q_j$, we have
\[|x-y_j|\geqslant 2|y_j-\bar{y_j}|.\]
Since 
\[
	[x,\bar{y_j}]^2=|x-\bar{y_j}|^2+(1-|\bar{y_j}|^2)(1-|x|^2),
\]
we have 
\[[x,\bar{y_j}]\geqslant |x-\bar{y_j}|\geqslant|x-y_j|-|y_j-\bar{y_j}|\geqslant\frac{1}{2}|x-y_j|.\]
Therefore, 
\[\int_{\B_n\backslash B_j}\frac{|y-y_j|}{[x,\bar{y_j}]^{n+1}}d\nu(x)\leqslant 2^{n+1}\int_{|x-y_j|\geqslant 2|y-y_j|}\frac{|y-y_j|}{|x-y_j|^{n+1}}d\nu(x)\leqslant C_4\]
for some constant $C_4=C_4(n)$.
This completes the proof of the claim.
\end{proof}

By (\ref{**}) and Claim \ref{claim2.1}, we have
\begin{equation}
	\begin{aligned}
		\int_{\B_n\backslash\Omega'}|(P  b)(x)|d\nu (x)&\leqslant C_4\sum\limits_{j=1}^\infty\int_{Q_j}|b(y)|d\nu (y)\\
		&=C_4\int_{\Omega}|b(y)|d\nu (y).
	\end{aligned}
\end{equation}
Since $|b(y)|=|f(y)-g(y)|\leqslant|f(y)|+|g(y)|\leqslant|f(y)|+C_1t,$ we have
\[\int_{\Omega}|b(y)|d\nu (y)\leqslant\int_{\Omega}|f(y)|d\nu (y)+C_1t\nu (\Omega).\]
By Lemma \ref{C-Z}, we have
\[\nu (\Omega)=\sum\limits_{j=1}^\infty\nu (Q_j)\leqslant\frac{1}{t}\sum\limits_{j=1}^\infty\int_{Q_j}|f(x)|d\nu (x)\leqslant\frac{1}{t}||f||_{L^1(\B_n,d\nu)}.\]
Therefore,
\[\int_{\B_n\backslash\Omega'}|(P  b)(x)|d\nu (x)\leqslant C_4\int_{\Omega}|b(y)|d\nu (y)\leqslant C_4(1+C_1)||f||_{L^1(\B_n,d\nu)}.\]
Moreover,
\[(P  b)_*(t)\leqslant \nu \big((\B_n\backslash\Omega')\cap\{|(P  b)(x)|>t\}\big)+\nu (\Omega')\leqslant\frac{C_4(1+C_1)+C_3}{t}||f||_{L^1(\B_n,d\nu)}.\]
This completes the proof of Theorem \ref{th1.1}.

\section{Acknowledgments}
I am grateful to Zipeng Wang for his guidance and valuable discussions. 
I would also like to thank reviewers for their careful reading and valuable comments, which greatly enrich the background and context of this work.

This work was completed while the author was visiting Chongqing University. 
The author gratefully acknowledges the financial support from the Department of Mathematics and the Mathematics Center at Chongqing University. 
Special thanks are extended to Yi Wang and Fugang Yan for their warm hospitality. 

This work was supported by the National Natural Science Foundation of China (Grant No. 12231005) and National Key R\&D Program of China (2024YFA1013400).


\begin{thebibliography}{10}
\bibitem{ABC1996}
P. Ahern, J. Bruna, C. Cascante,
\newblock {{$H^p$}-theory for generalized {$M$}-harmonic functions in the unit ball},
\newblock {\em Indiana Univ. Math. J.}, 45(1996), no.1, 103-135.

\bibitem{BCEM}
C. Bender, D. Chakrabarti, L. Edholm, M. Mainkar,
\newblock {$L^p$-regularity of the Bergman projection on quotient domains},
\newblock {\em Canad. J. Math.}, 74(2022), no.3, 732-772.

\bibitem{CE}
D. Chakrabarti, L. Edholm,
\newblock {Projections onto $L^p$-Bergman spaces of Reinhardt domains},
\newblock {\em Adv. Math.}, 451(2024), Paper No. 109790, 46 pp.

\bibitem{CH}
D. Chakrabarti, Z. Huo,
\newblock {Restricted type estimates on the Bergman projection of some singular domains},
\newblock {\em J. Geom. Anal.}, 35(2025), no.10,  Paper No. 302, 36 pp.

\bibitem{CK1}
A. Christopherson, K. Koenig, 
\newblock {Weak-type regularity of the Bergman projection on rational Hartogs triangles},
\newblock {\em Proc. Amer. Math. Soc.}, 151(2023), no.4, 1643-1653.

\bibitem{CK2}
A. Christopherson, K. Koenig, 
\newblock {Weak-type regularity of the Bergman projection on generalized Hartogs triangles in $\C^3$},
\newblock {\em Complex Var. Elliptic Equ.}, 69(2024), no.10,  1723-1738.

\bibitem{CK3}
A. Christopherson, K. Koenig, 
\newblock {Endpoint behavior of the Bergman projection on a class of $n$-dimensional Hartogs triangles},
\newblock {\em Complex Anal. Oper. Theory}, 18(2024), no.9, Paper No. 180, 9 pp.

\bibitem{CZ}
D. Chakrabarti, Y. E. Zeytuncu,
\newblock {$L^p$ mapping properties of the Bergman projection on the Hartogs triangle},
\newblock {\em Proc. Amer. Math. Soc.}, 144(2016), no.4,  1643-1653.

\bibitem{DHZZ2001}
Y. Deng, L. Huang, T. Zhao, D. Zheng,
\newblock {Bergman projection and {B}ergman spaces},
\newblock {\em J. Operator Theory}, 46(2001), no.1, 3-24.

\bibitem{EM1}
L. D. Edholm, J. D. McNeal,
\newblock {The Bergman projection on fat Hartogs triangles: $L^p$ boundedness.},
\newblock {\em Proc. Amer. Math. Soc.}, 144(2016), no.5, 2185-2196.

\bibitem{EM2}
L. D. Edholm, J. D. McNeal,
\newblock {Bergman subspaces and subkernels: degenerate $L^p$ mapping and zeroes},
\newblock {\em J. Geom. Anal.}, 27(2017), no.4, 2658-2683.

\bibitem{Englis2024}
M. Engli\v s, and E. Youssfi,
\newblock {{$M$}-harmonic reproducing kernels on the ball},
\newblock {\em J. Funct. Anal.}, 286(2024), no.1, Paper No. 110187.

\bibitem{Jaming2004}
S. Grellier, P. Jaming,
\newblock {Harmonic functions on the real hyperbolic ball. {II}. {H}ardy-{S}obolev and {L}ipschitz spaces},
\newblock {\em Math. Nachr.}, 268(2004), 50-73.

\bibitem{GTM}
L. Grafakos
\newblock {\em Classical Fourier Analysis},
\newblock Graduate Texts in Mathematics 249, 3rd ed., Springer, New York, 2014.

\bibitem{H}
H. Hedenmalm,
\newblock The dual of a Bergman space on simply connected domains, 
\newblock {\em J. Anal. Math.}, 88 (2002), 311-335.

\bibitem{HW}
Z. Huo, D. B. Wick, 
\newblock {Weak-type estimates for the Bergman projection on the polydisc and the Hartogs triangle},
\newblock {\em Bull. Lond. Math. Soc.}, 52(2020), no.5, 891-906.

\bibitem{Hytonen2012}
T. Hyt\"onen and A. Kairema
\newblock Systems of dyadic cubes in a doubling metric space, 
\newblock {\em Colloq. Math.}, 126(2012), no.1, 1-33.

\bibitem{Jaming1999}
P. Jaming,
\newblock {Harmonic functions on the real hyperbolic ball. {I}. {B}oundary values and atomic decomposition of {H}ardy spaces},
\newblock {\em Colloq. Math.}, 80(1999), no.1, 63-82.

\bibitem{KW}
K. Koenig, Y. Wang
\newblock {Harmonic Bergman theory on punctured domains},
\newblock {\em J. Geom. Anal.}, 31(2021), no.7, 7410-7435.

\bibitem{LiuShi2003}
C. Liu, J. Shi,
\newblock {Invariant mean-value property and {$\mathcal{M}$}-harmonicity in the unit ball of {$\R^n$}},
\newblock {\em Acta Math. Sin. (Engl. Ser.)}, 19(2003), no.1, 187-200.

\bibitem{Rudin}
W. Rudin,
\newblock {\em Function theory in the unit ball of $\C^n$},
\newblock {Grundlehren Math.Wiss.}, vol. 241,
Springer, New York, 1980.

\bibitem{S1}
A. A. Solov'ev, 
\newblock {Estimates in $L^p$ of the integral operators that are connected with spaces of analytic and harmonic functions. (Russian)},
\newblock {\em Dokl. Akad. Nauk SSSR}, 240(1978), no.6, 1301-1304.

\bibitem{S2}
A. A. Solov'ev, 
\newblock {Estimates in $L^p$ of integral operators connected with spaces of analytic and harmonic functions. (Russian)},
\newblock {\em Sibirsk. Mat. Zh.}, 26(1985), no.3, 168-191, 226.

\bibitem{Stoll1}
M. Stoll,
\newblock {\em invariant potential theory in the unit ball of $\C^n$},
\newblock {Lond. Math. Soc. Lect.} Note Series, vol. 199, Cambridge University Press, Cambridge, 1994.

\bibitem{Stoll2012}
M. Stoll,
\newblock {Weighted {D}irichlet spaces of harmonic functions on the real hyperbolic ball},
\newblock {\em Complex Var. Elliptic Equ.}, 57(2012), no.1, 63-89.

\bibitem{Stoll2019}
M. Stoll,
\newblock {The reproducing kernel of {$\mathcal{H}^2$} and radial eigenfunctions of the hyperbolic {L}aplacian},
\newblock {\em Math. Scand.}, 124(2012), no.1, 81-101.

\bibitem{Stoll}
M. Stoll,
\newblock {\em Harmonic and Subharmonic Function Theory on the hyperbolic ball},
\newblock {Lond. Math. Soc. Lect.} Note Series, vol. 431, Cambridge University Press, Cambridge, 2016.

\bibitem{Stein}
E. M. Stein, 
\newblock {\em Singular integrals and differentiability properties of functions},
\newblock Princeton Mathematical Series, vol. 30, Princeton University Press, Princeton, NJ, 1970.

\bibitem{Ureyen2023}
A.E. \"Ureyen,
\newblock {$\mathcal{H}$}-harmonic {B}ergman projection on the real hyperbolic ball,
\newblock {\em J. Math. Anal. Appl.}, 519(2023), 126802.

\end{thebibliography}
\end{document}